\theoremstyle{plain}
\newtheorem{theorem}{Theorem}[section]
\newtheorem{corollary}[theorem]{Corollary}
\newtheorem{proposition}[theorem]{Proposition}
\newtheorem{lemma}[theorem]{Lemma}
\theoremstyle{definition}
\newtheorem{definition}[theorem]{Definition}
\newtheorem{remark}[theorem]{Remark}
\title{$V$-minimal submanifolds}
\subjclass{58E20,53C43,53C42, 32Q15}
\keywords{$V$-harmonic maps, $V$-harmonic morphisms, PHWC maps, K\"ahler manifolds, locally conformal K\"ahler manifolds, minimal submanifolds}
\author{Monica Alice Aprodu}
\thanks{This work was supported by Romanian Ministry of Education,
	Program PN-III, Project number PN-III-P4-ID-PCE-2020-0025, Contract 30/04.02.2021 and "Dunarea de Jos" University of Galati Grant, Project number  RF 2488/31.05.2024.}
\begin{document}
	\maketitle
	%\section{}
	%\subsection{}

	\begin{abstract}
		We introduce the notion of $V$-minimality, for $V$ a smooth vector field on a Riemannian manifold, a natural extension of the classical notion of  minimality, and we prove several basic properties. One featured example is given for locally conformal Kaehler (l.c.K) manifolds. It is well-known that in general, complex submanifolds in non-Kaehler l.c.K manifolds are not minimal. We prove that, however, they are $V$-minimal for $V$ a suitable multiple of the Lee vector field. Extending some results from \cite{AAB}, to emphasis the utility of this notion, we prove that a PHH submersion is $V$-harmonic if and only if it has minimal fibres and a PHH $V$-harmonic submersion pulls back complex submanifolds to $V$ minimal submanifolds. 
		
	\end{abstract}
	
	%%%%%%%%%%%%%%%%%%%%%%%%%%%%%%%%%%%%%%%%
	%%%%%%%%%%%%%%%%%%%%%%%%%%%%%%%%%%%%%%%%
	\section{Introduction}
	
	Let $(M,g)$ and $(N,h)$ be compact Riemannian manifolds, $V$ a smooth vector field on $M$ and $\varphi:M\to N$ a smooth map. In \cite{Zhao}, \cite{CJW} the authors introduced the notion of $V$-harmonic map, see Definition \ref{vharm}, that naturally generalizes the classical notion of harmonic map. Unlike the latter, $V$-harmonicity is not defined via a variational problem, but rather by imposing the vanishing of a modified tension field, called the $V$-tension field of $\varphi$. If $V=0$, the two notions coincide, more generally, the same is true if $V$ is vertical.
	
	In this generalized context, $V$-harmonic morphisms appear naturally, \cite{Zhao}, and are directly connected to the minimality of the fibres, exactly as in the classical case, see Theorem~\ref{thm:V-ha-minimal} in \cite{Zhao}.
	
	In the case of K\"ahler target manifolds, harmonic morphisms generalize to pseudo-harmonic morphisms \cite{Loubeau}. Furthermore, if a natural extra-condition that the naturally almost complex structure on the horizontal distribution satisfies a K\"ahler-type condition, then the harmonicity implies the minimality of the fibres. These maps, called pseudo-horizontally homothetic (PHH) enjoy other geometric properties, \cite{AAB}.
	
	In the presence of a vector field $V$ on a manifold $M$, one can naturally ask whether the definition of minimality can be perturbed to underline the influence of $V$.
	The main goal of this paper is to propose a natural definition which we call $V$-minimality, Definition \ref{def:V-minimal}. As for $V$-harmonicity, minimality corresponds to the case $V=0$. One specific example is of particular interest. It is known that a complex submanifold $K$ in a locally conformal K\"ahler manifold $M$ is not minimal, unless the Lee vector field is tangent to the subvariety, Theorem 12.1, \cite{DO}. However, we notice that $K$ is $V$-minimal, for $V$ a suitable integer multiple of the Lee vector field of $M$, see Theorem \ref{thm:lck}. Furthermore, we connect $V$-minimality to PHH maps and their $V$-harmonicity.

		The outline of the paper is as follows. Firstly, we recall the notions needed for our aim, $V$-harmonic maps, PHH and PHWC maps, l.c.K manifolds and we briefly review some of their properties. 
		We then define $V$-minimal submanifolds, Definition \ref{def:V-minimal} and $V$-pseudo harmonic morphisms, Definition \ref{def:PVHM}. The main results of the paper are Theorem \ref{thm:lck},
		Theorem \ref{tpvhm}, Theorem \ref{p13} and Theorem \ref{thm:pullback}.
		Theorem \ref{p13} shows that for a PHH submersion, $V$-harmonicity is equivalent to having $V$-minimal fibres. In Theorem \ref{thm:pullback}, we prove that a PHH $V$-harmonic submersion pulls back complex submanifolds to $V$-minimal submanifolds.

	\section{$V$ - Harmonic Morphisms and $V$ - minimal submanifolds}
	
	\subsection{$V$ - Harmonic Maps and Morphisms}
	
	We remind the reader of some basic facts on $V$-Harmonic Maps and Morphisms that we shall use in the sequel \cite{Zhao},\cite{CJW}, \cite{CJQ}, \cite{CQ}, \cite{Q}.
	
	\begin{definition}[see \cite{CJW}, \cite{Zhao}]
		\label{vharm}
		Let $(M^m,g)$ and $(N^n,h)$ be two Riemannian manifolds, $V$ a smooth vector field on $M$, and $\varphi: M\rightarrow N$ a smooth map. The map $\varphi$ is called {\it $V$-harmonic} if satisfies:
		\begin{equation}\label{v}
			\tau_V(\varphi):=\tau(\varphi)+d\varphi(V)=0,
		\end{equation}
		where $\tau(\varphi)$ is the tension field of the map $\varphi.$ Since the differential $d\varphi$ of $\varphi$ can be viewed as a section of the bundle $T^*M\otimes \varphi^{-1}TN,$ $d\varphi(V)$ is a section of the bundle $\varphi^{-1}TN.$ The tension $\tau_V(\varphi)$ is called the {\it $V$-tension field of $\varphi$}. 
	\end{definition}
	
	\begin{remark}[cf. \cite{CJW}, \cite{Zhao}] In particular,
		\begin{itemize}
			\item [1.] For $V=0,$ the map $\varphi$ is harmonic.
			\item [2.] A smooth function $f:M\rightarrow \mathbf{R}$ is said to be {\it $V$-harmonic} if: 
			$$\Delta_V(f):=\Delta (f)+\langle V,\nabla f\rangle=0.$$
		\end{itemize}
	\end{remark}
	By taking the trace of the second fundamental form, we obtain the tension field of the map $\varphi,$ $\tau(\varphi)=\mathrm{trace} \nabla d\varphi,$ which is a section of $\varphi^{-1}TN.$ In local coordinates $(x_i)_{i=\overline{1,m}}$ on $M$ and $(y_\alpha)_{\alpha=\overline{1,n}}$ on $N,$ respectively, it has the following expression (see for example \cite{BairdWood}, \cite{EL}):
	\begin{equation}\label{tau1}
		\tau(\varphi)=\sum\limits_{\alpha=1}^n\tau(\varphi)^\alpha\frac{\partial}{\partial y_\alpha},
	\end{equation}
	where, denoting by ${^M\Gamma}_{ij}^k$ and ${^NL}_{\beta \gamma}^\alpha$ the Christoffel symbols of $M$ and $N$, and $\varphi^\alpha=\varphi\circ y_\alpha,$
	\begin{equation}\label{tau2}
		\tau(\varphi)^{\alpha}=
		\sum\limits_{i,j=1}^m g^{ij}\left( \frac{\partial^2\varphi^{\alpha}}{\partial x_i \partial x_j}-
		\sum\limits_{k=1}^m {^M\Gamma}_{ij}^k \frac{\partial \varphi ^{\alpha}}{\partial x_k}+
		\sum\limits_{\beta,\gamma=1}^n {^NL}_{\beta \gamma}^\alpha\frac{\partial \varphi^\beta}{\partial x_i}\frac{\partial \varphi^\gamma}{\partial x_j}\right).
	\end{equation}
	
	For $V$ a smooth vector field on $M$, given in local coordinates on $M$ by $V=\sum\limits_{i=1}^mV_i\frac{\partial}{\partial x_i}$, the $V$-tension field of the map $\varphi$ has the following expression in local coordinates:
	\begin{equation}\label{tauv}
		\tau_V(\varphi)=
		\sum\limits_{\alpha=1}^n\tau(\varphi)^\alpha \frac{\partial}{\partial y_\alpha}+\sum\limits_{\alpha=1}^n\sum\limits_{i=1}^m V_i\frac{\partial \varphi^\alpha}{\partial x_i}\frac{\partial}{\partial y_\alpha}.
	\end{equation}
	
	Using the properties of the second fundamental form and of the tension field of the composition of two maps, (see \cite{BairdWood}), we have the following lemma:
	\begin{lemma}[see \cite{Zhao}]
		\label{comp} 
		The $V$-tension field of the composition of two maps $\varphi:M\rightarrow N$ and $\psi:N\rightarrow P$ is given by:
		\begin{equation}
			\tau_V(\psi\circ \varphi)=d\psi(\tau_V(\varphi))+\mathrm{trace} \nabla d\psi(d\varphi,d\varphi).
		\end{equation}
	\end{lemma}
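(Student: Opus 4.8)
The plan is to prove the composition formula for $V$-tension fields by reducing it to the corresponding classical formula for ordinary tension fields, isolating the contribution of the vector field $V$ separately. The key observation is that, by Definition~\ref{vharm}, the $V$-tension field splits as $\tau_V(\psi\circ\varphi)=\tau(\psi\circ\varphi)+d(\psi\circ\varphi)(V)$, so it suffices to handle these two summands independently and then recombine them.

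First I would invoke the classical composition formula for the tension field, as recorded in \cite{BairdWood}, namely
\begin{equation*}
\tau(\psi\circ\varphi)=d\psi(\tau(\varphi))+\mathrm{trace}\,\nabla d\psi(d\varphi,d\varphi),
\end{equation*}
which follows from the chain rule for the second fundamental form $\nabla d(\psi\circ\varphi)=d\psi(\nabla d\varphi)+\nabla d\psi(d\varphi,d\varphi)$ upon tracing with respect to $g$. This takes care of the first summand and simultaneously produces the quadratic term $\mathrm{trace}\,\nabla d\psi(d\varphi,d\varphi)$ appearing on the right-hand side of the claim, which does not involve $V$ at all.

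Next I would treat the second summand. Using the chain rule for differentials, $d(\psi\circ\varphi)(V)=d\psi\bigl(d\varphi(V)\bigr)$, and then, since $d\psi$ is linear on each tangent space, I can add this to $d\psi(\tau(\varphi))$ to obtain
\begin{equation*}
d\psi(\tau(\varphi))+d\psi\bigl(d\varphi(V)\bigr)=d\psi\bigl(\tau(\varphi)+d\varphi(V)\bigr)=d\psi(\tau_V(\varphi)),
\end{equation*}
recognizing the bracketed expression as precisely the $V$-tension field $\tau_V(\varphi)$ of $\varphi$ by~\eqref{v}. Recombining this with the quadratic term from the previous step yields exactly the asserted identity $\tau_V(\psi\circ\varphi)=d\psi(\tau_V(\varphi))+\mathrm{trace}\,\nabla d\psi(d\varphi,d\varphi)$.

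I do not anticipate a genuine obstacle here: the result is essentially formal, resting on the linearity of $d\psi$ along $\varphi^{-1}TN$ and on the known classical formula. The only point requiring mild care is bookkeeping with the pullback bundles—ensuring that $d\varphi(V)$ is viewed correctly as a section of $\varphi^{-1}TN$ so that applying $d\psi$ makes sense, and that the quadratic term $\nabla d\psi(d\varphi,d\varphi)$ is interpreted via the symmetric bilinear second fundamental form of $\psi$ evaluated on the image of $d\varphi$. If one prefers a fully explicit verification, the identity can alternatively be checked in local coordinates using \eqref{tau2} and \eqref{tauv}, but the invariant argument above is cleaner and makes the role of $V$ transparent.
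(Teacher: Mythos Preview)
Your argument is correct and matches the paper's own treatment: the paper does not spell out a proof either but simply remarks, just before the lemma, that it follows from the classical composition formula for the tension field and the second fundamental form (citing \cite{BairdWood}), which is exactly the reduction you carry out. There is nothing to add.
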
 
	As in the case of harmonic maps, Zhao (Definition 1.2 in \cite{Zhao}) defined {\it $V$-harmonic morphisms} as maps $\varphi:M\rightarrow N,$ between Riemannian manifolds, which pulls back local harmonic functions on $N$ to local $V$-harmonic functions on $M$.
	
	The following results characterize the $V$-harmonic morphisms.
	
	\begin{theorem}[see \cite{Zhao}]
		Let $\varphi:(M^m,g)\rightarrow (N^n,h)$ be a smooth map between Riemannian manifolds. Then the following conditions are equivalent:
		\begin{itemize}
			\item [1)] $\varphi$ is a $V$-harmonic morphism;
			\item [2)] $\varphi$ is a horizontally weakly conformal $V$-harmonic map;
			\item [3)] $\forall \psi:W\rightarrow P$ a smooth map from an open subset $W\subset N$ with $\varphi^{-1}(W)\neq\emptyset,$ to a Riemann manifold $P$, we have: $\tau_V(\psi\circ \varphi)=\lambda^2\tau(\psi),$ for some smooth function $\lambda^2:M\rightarrow [0,\infty);$
			\item [4)] $\forall \psi:W\rightarrow P$ a harmonic map from an open subset $W\subset N$ with $\varphi^{-1}(W)\neq\emptyset,$ to a Riemann manifold $P$, the map $\psi\circ \varphi$ is a $V$-harmonic map;
			\item [5)] $\exists \lambda^2:M\rightarrow [0,\infty)$ a smooth function such that: $\Delta_V(f\circ \varphi)=\lambda^2\Delta f,$ for any function $f$ defined on an open subset $W$ of $N$ with $\varphi^{-1}(W)\neq\emptyset.$
		\end{itemize}
	\end{theorem}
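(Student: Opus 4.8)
The plan is to follow the classical Fuglede--Ishihara scheme for harmonic morphisms (see \cite{BairdWood}), with the ordinary tension field systematically replaced by its $V$-modification, and to close the equivalences through the cycle $(1)\Rightarrow(2)\Rightarrow(3)\Rightarrow(4)\Rightarrow(1)$, incorporating $(5)$ by means of $(3)\Rightarrow(5)\Rightarrow(1)$. The single computational tool throughout is the composition formula of Lemma~\ref{comp},
\[
\tau_V(\psi\circ\varphi)=d\psi\big(\tau_V(\varphi)\big)+\mathrm{trace}\,\nabla d\psi(d\varphi,d\varphi),
\]
whose decisive structural feature is that the vector field $V$ enters only through the first-order term $d\psi(\tau_V(\varphi))$, while the second-order term $\mathrm{trace}\,\nabla d\psi(d\varphi,d\varphi)$ is completely insensitive to $V$. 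This is what will allow the classical separation-of-jets argument to go through verbatim, with $\tau(\varphi)$ replaced by $\tau_V(\varphi)$ in the resulting first-order condition.

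The crux is the implication $(1)\Rightarrow(2)$, and this is the one genuinely non-formal step. Fixing $x_0\in M$ and setting $y_0=\varphi(x_0)$, I would specialize the composition formula to $\psi=f$ a function defined near $y_0$, so that $\tau_V(f\circ\varphi)=\Delta_V(f\circ\varphi)$ and $\tau(f)=\Delta f$; in an orthonormal frame this reads
\[
\Delta_V(f\circ\varphi)=df\big(\tau_V(\varphi)\big)+\sum_{\alpha,\beta}(\mathrm{Hess}\,f)_{\alpha\beta}\,\Big(g^{ij}\tfrac{\partial\varphi^\alpha}{\partial x_i}\tfrac{\partial\varphi^\beta}{\partial x_j}\Big).
\]
If $\varphi$ is a $V$-harmonic morphism, the left-hand side vanishes whenever $f$ is harmonic. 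I would then invoke the classical existence lemma of Fuglede and Ishihara (\cite{BairdWood}): for arbitrary prescribed data $\xi\in T^*_{y_0}N$ and an arbitrary symmetric bilinear form $B$ on $T_{y_0}N$ that is trace-free with respect to $h$, there exists a harmonic function $f$ near $y_0$ with $df(y_0)=\xi$ and $\nabla df(y_0)=B$. Feeding such functions into the identity above, evaluated at $x_0$: taking $B=0$ and letting $\xi$ range freely forces $\tau_V(\varphi)=0$ at $x_0$, that is, $\varphi$ is $V$-harmonic; taking $\xi=0$ and letting $B$ range over all trace-free symmetric forms forces the positive semidefinite tensor $g^{ij}\tfrac{\partial\varphi^\alpha}{\partial x_i}\tfrac{\partial\varphi^\beta}{\partial x_j}$ to be pointwise proportional to $h^{\alpha\beta}$, which is precisely horizontal weak conformality with some dilation $\lambda^2\geq 0$. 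Since $x_0$ is arbitrary, $(2)$ follows. The main obstacle here is exactly this existence lemma together with the observation that, because $V$ contributes only to the first-order term, it is absorbed cleanly into the $\tau_V(\varphi)=0$ conclusion without disturbing the conformality conclusion.

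The remaining implications are formal consequences of the same composition formula. For $(2)\Rightarrow(3)$, $V$-harmonicity annihilates the first-order term, while horizontal weak conformality converts the second-order term into $\lambda^2\,\mathrm{trace}_h\nabla d\psi=\lambda^2\tau(\psi)$, giving $\tau_V(\psi\circ\varphi)=\lambda^2\tau(\psi)$. The implication $(3)\Rightarrow(4)$ is immediate, since a harmonic $\psi$ has $\tau(\psi)=0$, whence $\tau_V(\psi\circ\varphi)=0$. For $(4)\Rightarrow(1)$ one restricts to $P=\mathbf{R}$, where harmonic maps are harmonic functions and $V$-harmonic maps are $V$-harmonic functions, recovering the defining pull-back property of a $V$-harmonic morphism. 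Finally, $(5)$ is brought into the cycle by specializing $(3)$ to $P=\mathbf{R}$ and $\psi=f$, which yields $\Delta_V(f\circ\varphi)=\lambda^2\Delta f$, i.e.\ $(5)$; and $(5)\Rightarrow(1)$ follows at once by taking $f$ harmonic. Threading these together gives the full equivalence of the five conditions.
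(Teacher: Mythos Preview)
The paper does not supply its own proof of this theorem: it is stated in the preliminaries with the attribution ``see \cite{Zhao}'' and no argument is given. There is therefore nothing in the present paper to compare your proposal against.

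That said, your proposal is a correct and standard adaptation of the Fuglede--Ishihara argument, and it is almost certainly the approach taken in \cite{Zhao} as well: the observation that $V$ enters the composition formula only through $d\psi(\tau_V(\varphi))$, leaving the second-order term untouched, is exactly what makes the classical separation-of-jets argument transfer verbatim with $\tau(\varphi)$ replaced by $\tau_V(\varphi)$. One small point worth making explicit in your write-up of $(2)\Rightarrow(3)$ is that the function $\lambda^2$ obtained is the dilation of $\varphi$ and hence is the \emph{same} for all $\psi$, as required by the statement of $(3)$; your argument gives this, but the phrasing ``for some smooth function'' in the statement could be misread.
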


	\begin{corollary}[see \cite{Zhao}]
		\label{lift}
		For $\varphi:M\rightarrow N$ a $V$-harmonic morphism with dilation $\lambda$ and $\psi:N\rightarrow P$ a harmonic morphism with dilation $\theta$, the composition $\psi\circ \varphi$ is a $V$-harmonic morphism with dilation $\lambda(\theta\circ \varphi).$
	\end{corollary}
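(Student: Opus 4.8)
The plan is to deduce everything from the characterizations of $V$-harmonic morphisms recorded in the Theorem above, so that no new tension-field computation is needed; the composition Lemma \ref{comp} stays in reserve as an alternative route. First I would establish, at the level of the defining pullback property, that $\psi\circ\varphi$ is a $V$-harmonic morphism. Let $f$ be a local harmonic function on an open set $W\subset P$ with $(\psi\circ\varphi)^{-1}(W)\neq\emptyset$. Since $\psi$ is a harmonic morphism, $f\circ\psi$ is harmonic on $\psi^{-1}(W)\subset N$; since $\varphi$ is a $V$-harmonic morphism, it pulls this local harmonic function back to a local $V$-harmonic function $(f\circ\psi)\circ\varphi=f\circ(\psi\circ\varphi)$ on $M$. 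Hence $\psi\circ\varphi$ pulls back local harmonic functions on $P$ to local $V$-harmonic functions on $M$, i.e. it is a $V$-harmonic morphism.

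To identify the dilation I would use characterization 5) of the Theorem. Applying it to $\varphi$ with the function $f\circ\psi$ on $N$ gives, as functions on $M$,
$$\Delta_V\bigl(f\circ(\psi\circ\varphi)\bigr)=\lambda^2\cdot\bigl(\Delta^N(f\circ\psi)\bigr)\!\circ\varphi.$$
Applying the classical ($V=0$) instance of 5) to the harmonic morphism $\psi$ yields $\Delta^N(f\circ\psi)=\theta^2\cdot(\Delta^P f)\!\circ\psi$ on $N$, where $\theta^2$ is a function on $N$. Composing with $\varphi$ and substituting,
$$\Delta_V\bigl(f\circ(\psi\circ\varphi)\bigr)=\lambda^2\,(\theta^2\circ\varphi)\cdot(\Delta^P f)\!\circ(\psi\circ\varphi),$$
so that $\psi\circ\varphi$ satisfies 5) with dilation squared $\mu^2=\lambda^2\,(\theta^2\circ\varphi)$. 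By the equivalence of the conditions in the Theorem this both reconfirms that $\psi\circ\varphi$ is a $V$-harmonic morphism and shows that its dilation is $\mu=\lambda\,(\theta\circ\varphi)$, as claimed.

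The computation is essentially formal once the Theorem is in hand; the only point demanding genuine care is the bookkeeping of where each dilation is evaluated, since $\theta$ is defined on $N$ and must be composed with $\varphi$ to become a function on $M$, and one must check that the open-set conditions ($\psi^{-1}(W)$ and $\varphi^{-1}(\psi^{-1}(W))$ nonempty) propagate correctly through the two successive pullbacks. I do not expect a real obstacle here: this is a direct corollary of the equivalences, and the route through condition 5) is preferable to one through condition 3) or through Lemma \ref{comp} precisely because it works with scalar Laplacians and thereby sidesteps the pullback-bundle bookkeeping that the tension-field identity $\tau(\chi\circ\psi)=\theta^2\,(\tau\chi\circ\psi)$ would otherwise force upon us.
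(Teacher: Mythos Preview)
Your argument is correct. The paper does not actually supply a proof of this corollary; it is quoted from \cite{Zhao} and stated without proof, so there is nothing to compare against at the level of argument. Your route---first verifying the pullback property directly from the definitions, then reading off the dilation via characterization 5) of the preceding Theorem---is the natural one and exactly the kind of proof one would expect in \cite{Zhao}. The bookkeeping you flag (that $\theta$ lives on $N$ and must be precomposed with $\varphi$, and that the nested open-set conditions propagate) is handled correctly.
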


	\begin{theorem}[see \cite{Zhao}]
		\label{thm:V-ha-minimal}
		For a horizontally weakly conformal map $\varphi:M\rightarrow N$ with dilation $\lambda$, any two of the following conditions imply the third:
		\begin{itemize}
			\item [1)] $\varphi$ is a $V$-harmonic map (and so a $V$-harmonic morphism);
			\item [2)] $V+\nabla \mathrm{log}(\lambda^{2-n})$ is vertical at regular points;
			\item [3)] the fibres of $\varphi$ are minimal at regular points.
		\end{itemize}
	\end{theorem}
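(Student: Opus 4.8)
The plan is to reduce all three conditions to a single identity for the horizontal part of the $V$-tension field, obtained from the Baird--Eells fundamental equation for horizontally weakly conformal maps. First I would fix a regular point $x$, so that $\lambda(x)>0$ and $T_xM$ splits orthogonally as $\mathcal{H}_x\oplus\mathcal{V}_x$, where $\mathcal{V}_x=\ker d\varphi_x$ is tangent to the fibre and $d\varphi_x|_{\mathcal{H}_x}$ is a conformal isomorphism onto $T_{\varphi(x)}N$; I write $(\cdot)^{\mathcal{H}}$ for the horizontal projection. The central input is the classical formula (see \cite{BairdWood}) expressing the tension field of a horizontally weakly conformal map through the horizontal gradient of its dilation and the mean curvature of its fibres:
\begin{equation}\label{eq:BE}
\tau(\varphi)=d\varphi\big((\nabla\mathrm{log}(\lambda^{2-n}))^{\mathcal{H}}\big)-(m-n)\,d\varphi(H),
\end{equation}
where $H$ is the mean curvature vector field of the fibres, a horizontal field vanishing exactly where the fibres are minimal. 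I would establish \eqref{eq:BE} by the standard frame computation, summing $\nabla d\varphi$ over an adapted orthonormal frame $\{X_i\}\cup\{U_a\}$ split into horizontal and vertical parts: the vertical terms $\sum_a(\nabla d\varphi)(U_a,U_a)=-d\varphi\big(\sum_a(\nabla_{U_a}U_a)^{\mathcal{H}}\big)$ produce the mean-curvature term, while the horizontal terms produce the dilation term with coefficient $2-n$ (this coefficient being the delicate point, which one can verify directly in the model case $n=1$ of a real-valued $\varphi$ with $|\nabla\varphi|=\lambda$, where it equals $1$).

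Next I would simply add $d\varphi(V)$ to \eqref{eq:BE}. Since $d\varphi$ annihilates the vertical distribution, $d\varphi(V)=d\varphi(V^{\mathcal{H}})$, and therefore
\begin{equation}\label{eq:Vtau}
\tau_V(\varphi)=d\varphi\Big(\big(V+\nabla\mathrm{log}(\lambda^{2-n})\big)^{\mathcal{H}}-(m-n)H\Big).
\end{equation}
The two vectors $\big(V+\nabla\mathrm{log}(\lambda^{2-n})\big)^{\mathcal{H}}$ and $(m-n)H$ are both horizontal, and at the regular point $x$ the map $d\varphi_x$ is injective on $\mathcal{H}_x$. Hence \eqref{eq:Vtau} gives the equivalence
$$\tau_V(\varphi)=0 \iff \big(V+\nabla\mathrm{log}(\lambda^{2-n})\big)^{\mathcal{H}}=(m-n)H.$$

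Finally I would read off the statement by setting $A:=\big(V+\nabla\mathrm{log}(\lambda^{2-n})\big)^{\mathcal{H}}$ and $B:=(m-n)H$. Condition 1) is precisely the identity $A=B$; condition 2) is $A=0$ (the horizontal part of $V+\nabla\mathrm{log}(\lambda^{2-n})$ vanishes, i.e. this field is vertical); and, since $m>n$, condition 3) is $B=0$ (minimal fibres). The claim that any two imply the third is then the trivial observation that among the three assertions "$A=B$", "$A=0$", "$B=0$" about a pair of vectors, any two force the remaining one; as this holds at every regular point, the conclusion follows there. I expect the only genuine work to be the derivation of \eqref{eq:BE} with the correct coefficient $2-n$ and the correct horizontal/vertical bookkeeping of the mean-curvature term; once this fundamental equation is in hand, the addition of $d\varphi(V)$ and the injectivity of $d\varphi$ on $\mathcal{H}$ make the three-way equivalence immediate.
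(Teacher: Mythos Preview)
The paper does not supply its own proof of this theorem; it is quoted verbatim from \cite{Zhao} in the preliminaries section, with no argument given. Your proposal is correct and is the standard route: the Baird--Eells fundamental equation for the tension of a horizontally weakly conformal map, followed by adding $d\varphi(V)$ and using that $d\varphi$ is injective on the horizontal distribution at regular points, is precisely how one reduces the three conditions to the trivial ``any two of $A=B$, $A=0$, $B=0$'' observation. This is essentially the same argument Zhao gives, which in turn is the obvious $V$-perturbation of the classical Baird--Eells theorem (the case $V=0$).

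One small remark on bookkeeping: your clause ``since $m>n$'' is needed to pass from $(m-n)H=0$ to $H=0$, and you should note explicitly that a horizontally weakly conformal map with regular points necessarily has $m\ge n$, with the case $m=n$ being degenerate (discrete fibres, so condition 3) is vacuous and $B=0$ automatically). Apart from this, and the acknowledged care needed in deriving the coefficient $2-n$ in \eqref{eq:BE}, the argument is complete.
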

	%%%%%%%%%%%%%%%%%%%%%%%%%%%%%%%%%%%
	\subsection{$V$ - minimal submanifolds}
	
	Let $(M,g)$ be a Riemannian manifold, $V$ a smooth vector field on $M$, and $K$ a submanifold of $M$. For any $x\in K$, we have the orthogonal decomposition of the tangent bundle $T_xM=T_xK\oplus T_xK^{\perp},$ with respect to $g_x.$ According to this decomposition, we have $${\nabla}^M_XY={\nabla}^K_XY+A(X,Y), \mbox{ for all } X,Y\in \Gamma(TK).$$ The symmetric bilinear map $A:TK\times TK\rightarrow TK^{\perp}$ is {\it the second fundamental form of the submanifold} $K$.
	
	\begin{definition}\label{def:V-minimal}
		The submanifold $K$ of $M$ is called {\it $V$-minimal} if 
		$$
		\mathrm{trace}(A)-V\in \Gamma(TK).
		$$
	\end{definition}
	\begin{remark}
		Note that, as the trace of the second fundamental form has the image in the normal bundle, the $V$-minimality can be rephrased as 
		$$
		\mathrm{trace} (A)-V^{\perp}=0.
		$$ 
	\end{remark}
	
	\begin{remark}
		Assume $\varphi:M\to N$ is a differentiable map and $K$ is a fibre of $\varphi$. Then the $V$-minimality condition translates to: $\mathrm{trace}(A)-V$ is a vertical vector. We refer to the  subsequent Proposition \ref{prop:R-Submersion} for further aspects of $V$--minimality of the fibers.
	\end{remark}

	\begin{proposition}
		Let $K$ be a closed submanifold of a Riemann manifold $(M,g).$ Then there exists $V$ a smooth vector field on $M,$ such that $K$ is $V$-minimal.
	\end{proposition}
	
	\begin{proof}
		Let $A:TK\times TK\rightarrow TK^{\perp}$ be the second fundamental form of the submanifold $K$.
		
		Consider on $K$ the vector field ${\rm trace}(A).$ It is known that it is a normal vector field on $K.$ Using a known result (see \cite{JL} Lemma 8.6 and Exercise 8-15) there exists a smooth vector field $V$ on $M,$ such that $V_{|K}={\rm trace}(A).$ This implies that $V_{|K}=V^{\perp}_{|K}$ and, moreover, ${\rm trace}(A)-V^{\perp}=0.$ In conclusion, $K$ is $V$-minimal.
	\end{proof}

	This proposition can be adapted to Riemann submersions such that the result holds true for a vector field $V$ that is universal across all fibers, as follows.
	
	Recall that (see\cite{Neill}) for a Riemannian submersion $\varphi: M\rightarrow N,$ can be defined two tensors, one of which is the second fundamental form of all the fibers.
	
	If $\mathcal{H}$ and $\mathcal{V}$ denote the horizontal and the vertical distribution on $M$, then the second fundamental form of all fibers $\varphi^{-1}(y),$ $y\in N,$ gives rise to a (1,2)-tensor field on $M$, defined by:
	$$T_XY=\mathcal{H}\nabla^M_{\mathcal{V}X}(\mathcal{V}Y)+\mathcal{V}\nabla^M_{\mathcal{V}X}(\mathcal{H}Y),
	$$ 
	for all arbitrary vector field $X,Y$ in $M$ and $\nabla^M$ the covariant derivative on $M$.
	
	It is known that the tensor $T$ has the following properties:
	\begin{itemize}
		\item [1)] $T_X$ is a skew-symmetric linear operator on $TM$ and it reverses the horizontal and vertical subspaces.
		\item [2)] $T_X=T_{\mathcal{V}X},$ for all $X$ in $M$.
		\item [3)] $T_XY=T_YX,$ for all $X,Y$ vertical vector fields on $M$.
		\item [4)] If $X,Y$ are horizontal vector fields, and $V,W$ are vertical vector fields on $M$, then: $\nabla^M_VW=T_VW+\mathcal{V}\nabla^M_VW$ and $\nabla^M _VX=\mathcal{H}\nabla^M_VX+T_VX.$
	\end{itemize}
	
	In this context, we prove the following.
	
	\begin{proposition}
		\label{prop:R-Submersion}
		Let $\varphi:M\rightarrow N$ be a Riemannian submersion. Then there exists a smooth vector field $V$ on $M$ such that any fiber of $\varphi$ is $V$-minimal.
	\end{proposition}
	
	\begin{proof}
		Let us consider $T_XY=\mathcal{H}\nabla^M_{\mathcal{V}X}(\mathcal{V}Y)+\mathcal{V}\nabla^M_{\mathcal{V}X}(\mathcal{H}Y),$ the (1,2)-tensor field on $M$, described above.
		
		Choose the vector field $V$ on $M$, to be the $\rm{trace} (\mathcal{H}\nabla^M_{\mathcal{V}X}(\mathcal{V}Y))$.
		
		For any $y\in N$, denote by $K$ the fiber $\varphi^{-1}(y).$ Then, $$V_{|K}=\rm{trace}(\mathcal{H}\nabla^M_{X}Y), \forall X,Y\in \Gamma(TK).$$
		But, the horizontal component of $\nabla^M_XY, \forall X,Y\in \Gamma(TK)$ is exactly the second fundamental form of $K$ in $M$, so is equal to $A(X,Y).$
		
		Thus, $\rm{trace}(A)-V^\perp_{|K}=\rm{trace}(A)-V_{|K}=0,$ so the fiber $K$ is $V$-minimal.
	\end{proof}

	This propositions provide a plethora of examples. However, the most interesting ones are obtained when $V$ is a naturally defined vector field on $M$, as exemplified in the next subsection. 
	
	\subsection{$V$--minimality and locally conformal K\"ahler manifolds}	
	We recall here briefly the definitions of a locally conformal K\"ahler manifold and a locally conformal K\"ahler submersion.
	
	\begin{definition}(\cite{DO}, \cite{Vaisman})
		A complex n-dimensional Hermitian manifold $(M^{2n}, J, g)$, where $J$
		denotes its complex structure and $g$ its Hermitian metric is called
		a {\it locally conformal K\" ahler (l.c.K.) manifold} if there is an open cover $\{U_i\}_{i\in I}$ of $M$ and a family of $\mathcal{C}^\infty$ functions $f_i:U_i\rightarrow \mathbf{R}$, $i\in I,$ such that each local
		metric: 
		\begin{equation}\label{lck}
			g_i=\mathrm{exp}(-f_i)g_{|U_i}
		\end{equation} is K\" ahlerian, where $g_{|U_i}=\iota^\star_i,$ and $\iota_i:U_i\rightarrow M$ is the inclusion.
		
		The manifold is called {\it globally conformal K\"ahler (g.c.K) manifold} if there exist a $\mathcal{C}^\infty$ function $f:M\rightarrow \mathbf{R},$ such that the metric $exp(-f)g$ is K\"ahler.
	\end{definition}
	
	Let $\Omega, \Omega_i$ be the 2-forms associated with $(J, g)$ and $(J, g_i),$ respectively ($\Omega (\cdot, \cdot) = g(\cdot, J\cdot)$).
	Then (\ref{lck}) implies: 
	\begin{equation}
		\Omega_i=\mathrm{exp}(-f_i)\Omega_{|U_i}.
	\end{equation}
	
	A Hermitian manifold $(M^{2n}, J, g)$ is l.c.K manifold if and only if there exists a globally defined cosed 1-form $\omega$ on $M,$ such that $d\Omega=\omega\wedge \Omega$ (see \cite{DO}, Theorem 1.1). The closed 1-form $\omega$ is called the {\it Lee form of the manifold}. When the manifold is g.c.K, the Lee form is exact and when the manifold is K\"ahler, the Lee form is zero.
	
	The vector field $B=\omega^\sharp$ on $M$, described by the condition by $\omega(X)=g(X,B),$ for all $X$ in $M$, is called the {\it Lee vector field} of $M$.
	
	\medskip
	
	A complex submanifolds of a l.c.K. manifold is seldom minimal, see Theorem 12.1, \cite{DO}. However, it is $V$-minimal for a suitable vector field $V$, 
	%$=-nB$, where $n$ denotes the complex dimension of the submanifold, 
	as shown below.
	
	\begin{theorem}
		\label{thm:lck}
		With the notation as above, let $K$ be a complex submanifold of a l.c.K manifold $M$ of complex dimension $m,$ and $V$ a smooth vector field on $M$. 
		Then $K$ is a $V$-minimal submanifold on $M$, for $V=-mB$.
	\end{theorem}
	
	\begin{proof}	
		Denote as usual by $A$ the second fundamental form of the complex submanifold $K\subset M.$ 
		Using eq.(12.26), Theorem 12.1, \cite{DO}, and choosing an orthonormal frame on $K$, $\{E_i,JE_i\}_{i=\overline{1,n}}$, $m={\rm{dim}_{\mathbb{C}}}K $,
		\begin{equation}
			\begin{array}{lll}
				\mathrm{trace} (A)&=&\sum_{i=1}^{m}(A(E_i,E_i)+A(JE_i,JE_i))\\
				\\
				&=&-\sum_{i=1}^{m}g(E_i,E_i)B^\perp\\
				\\
				&=&-n B^\perp.
			\end{array}
		\end{equation}
		If we choose the vector field on $M$ to be $V=-mB$, then
		$\mathrm{trace}(A)-V=-m(B^\perp-B)\in \Gamma(TK)$, which meas that $K\subset M$ is a $V=-mB$ minimal submanifold.
	\end{proof}	
	
	\begin{definition}(\cite{DO}, \cite{Marrero})
		Let $(M,J,g)$ and $(M^\prime, J^\prime, g^\prime)$ be two almost Hermitian manifolds and $\varphi:M\rightarrow M^\prime$ a Riemannian submersion, which is holomorphic (i.e. $d\varphi \circ J=J^\prime\circ d\varphi$). Moreover, if $(M,J,g)$ is a l.c.K manifold, then $\varphi$ is called a {\it l.c.K submersion}.
	\end{definition}
	
	Another example is obtained when considering l.c.K submersions, as proven in the following result.
	
	\begin{proposition}
		Let $\varphi:M\rightarrow M^\prime$ a l.c.K submersion, and $V$ a vector field on $M$, where $M$ is compact. Then, the fibers of $\varphi$ are $V$- minimal if and only if the Lee vector field $B$ of $M$ satisfies $V+mB$ is a vertical vector field, where $\rm {m}$ is the dimension of the fiber.
		
	\end{proposition}
	\begin{proof}
		Let $y\in M^\prime$, and denote by $K,$ the fiber $\varphi^{-1}(y)$ which is an immersed complex submanifold of $M$ ($K\xhookrightarrow{i} M$).
		
		The l.c.K metric $g$ on $M$ induces a l.c.K metric $g_K$ on $K$ with the Lee form $\omega_K=i^\star \omega,$ and the Lee vector field $B_K$ given by the decomposition $B=B_K+B^\perp,$ $B_K\in \Gamma(TK),$ $B^\perp\in \Gamma(TK^\perp).$
		
		Using Gauss-Weingarten equation of the submanifold $K$ we have:
		$$\nabla^M_XB_K=\nabla^K_XB_K+A(X,B_K),$$ 
		$$\nabla^M_XB^\perp=-\mathcal{A}_{B^\perp}(X)+D_XB^\perp,$$
		for all $X\in\Gamma(TK),$ where $A$ is the second fundamental form of $K$ in $M,$ $D$ is the induced connection of the normal bundle of $K,$ and $-\mathcal{A}_{B^\perp}(X)$ is the tangential component of $\nabla^M_XB^\perp.$ Moreover, for any $X,Y\in\Gamma(TK),$ 
		$$g(\mathcal{A}_{B^\perp}(X),Y)=g(A(X,Y),B^\perp),$$ and
		the mean curvature vector of $K$ is \begin{equation}\label{e1}
			\eta=\frac{1}{2m}\rm{trace}(A).
		\end{equation}
		
		If $K$ is $V$-minimal, then $\rm{trace}(A)-V$ is a vertical vector field.
		
		Keeping in mind that $\rm{trace}(A)\in \Gamma(TK^\perp),$ and decompose $V=V^v+V^\perp$ into tangent and normal component with respect to $K,$ we have 
		\begin{equation}\label{e2}
			\rm{trace}(A)-V^\perp=0.
		\end{equation}
		As $M$ is a l.c.K manifold, let us denote by $\tilde{A}$ the (local) second fundamental form of $K$ with respect to the local K\"ahler metric of the l.c.K structure on $M,$ and $\tilde{\eta}$ the corresponding mean curvature vector. Then, (see \cite{DO}, Theorem 1.2 or \cite{Vaisman1} or \cite{Vaisman}), 
		$\tilde{A}(X,Y)=A(X,Y)+\frac{1}{2}g(X,Y)B^\perp,$ for all $X,Y\in \Gamma(TK),$ and $\tilde{\eta}=\eta+\frac{1}{2}B^\perp.$
		
		Since complex submanifolds of K\"ahler manifolds are minimal, $\tilde{\eta}=0,$ and consequently \begin{equation}\label{e3}
			\eta=-\frac{1}{2}B^\perp.
		\end{equation}
		Using (\ref{e1}), (\ref{e2}), (\ref{e3}), we obtain: $B^\perp=-\frac{1}{m}V^\perp,$ which implies that $V+mB$ is a vertical vector field.
		
		Conversely, if $V+mB$ is a vertical vector field, then $B^\perp=-\frac{1}{m}V^\perp.$ Since $\tilde{\eta}=0,$ and using (\ref{e3}), we get $\eta=\frac{1}{2m}V^\perp$ and, from (\ref{e1}), $\rm{trace}(A)-V^\perp=0.$ So $K$ is $V$-minimal.
	\end{proof}
	\begin{remark}
		Let $\varphi:M\rightarrow M^\prime$, $M$ compact, a l.c.K submersion, and $V$ a vector field on $M$. Denote by $\omega,$ $\omega^\prime$ the Lee forms of $M$ and $M^\prime$, and by $B$ and $B^\prime$ the corresponding Lee vector fields. From \cite{DO}, Proposition 10.1, $\omega(X)=\omega^\prime(X^\prime)\circ \varphi, $ for any horizontal vector field $X$ in $M$, which is $\varphi$ related to $X^\prime\in \Gamma(TM^\prime),$ i.e $d\varphi (X)=X^\prime,$ and $\mathcal{H}(B)$ is a horizontal vector field $\varphi$ related to $B^\prime,$ given by the decomposition $B=\mathcal{V}(B)+\mathcal{H}(B)$ of $B$ into vertical and horizontal component with respect to $M.$
		
		Il the fibers of $\varphi$ are $V$-minimal, then $V+mB$ is a vertical vector and $d\varphi(B)=-\frac{1}{m}d\varphi(V).$
		
		Then, $-\frac{1}{m}d\varphi(V)=d\varphi(\mathcal{V}(B)+\mathcal{H}(B))=d\varphi(\mathcal{H}(B))=B^\prime.$
		So the Lie vector field on $M^\prime$ is $-\frac{1}{m}d\varphi(V).$
	\end{remark}

	%%%%%%%%%%%%%%%%%%%%%%%%%%%%%%%%%%%%
	\section{$V$-Pseudo Harmonic Morphisms}
	%%%%%%%%%%%%%%%%%%%%%%%%%%%%%%%%%%%%
	\subsection{Pseudo Harmonic Morphisms and Pseudo Horizontally Homothetic Maps}
	
	The notion of harmonic morphisms can be generalized when the target manifold is endowed with a K\" ahler structure (see \cite{Loubeau}, \cite{C}).
	
	Let us consider a smooth map $\varphi :(M^m,g)\rightarrow (N^{2n},J,h)$ from a Riemannian manifold to a K\" ahler manifold.
	The map $\varphi$ is said to be a {\em pseudo-harmonic morphism} (shortening PHM)
	if and only if it pulls back local holomorphic functions on $N$ to local harmonic maps 
	from $M$ to $\bf C$.
	
	For any $x\in M,$ denote by $d\varphi ^*_x:T_{\varphi (x)}N\rightarrow T_xM$
	the adjoint map of the tangent linear map
	$d\varphi _x:T_xM\rightarrow T_{\varphi (x)}N.$ 
	
	If $X$ is a local section on the pull-back bundle $\varphi^{-1}TN$, then $d\varphi ^*(X)$ is a local horizontal vector field on $M$.
	
	\begin{definition}[see \cite{Loubeau}]
		The map $\varphi $ is called {\em 
			pseudo-horizontally (weakly) conformal (shortening PHWC) at $x\in M$}
		if $[d\varphi _x \circ d\varphi _x ^*,J]=0.$ 
		\medskip
		
		The map $\varphi $ is called {\em 
			pseudo-horizontally (weakly) conformal}
		if it is pseudo-horizontally (weakly) conformal  
		at every point of $M$. 
	\end{definition}
	
	Then, {\em pseudo-harmonic morphism} can also be characterised as harmonic, pseudo-horizontally (weakly) conformal maps (see \cite{Loubeau}, \cite{C}).
	
	\medskip
	
	The local description of PHWC condition is given
	by the following (see \cite{Loubeau}): let $(x_i)_{i=\overline{1,m}}$ be real local coordinates on $M$, $(z_\alpha)_{\alpha=\overline{1,n}}$ be complex local coordinates on $N$, and
	$\varphi ^\alpha=z_\alpha\circ \varphi, $ $\forall \alpha=1,...,n$.
	Then the PHWC condition for $\varphi $ reads: 
	\begin{equation}\label{phwc}
		\sum\limits_{i,j=1}^mg^{ij}_M
		\frac{\partial \varphi ^\alpha}{\partial x_i}
		\frac{\partial \varphi ^\beta}{\partial x_j}=0
	\end{equation}
	for all $\alpha,\beta=1,...,n$.
	
	\medskip
	
	A special class of pseudo-horizontally weakly conformal maps,
	are pseudo-horizontally homothetic maps.
	
	\begin{definition}[see \cite{AAB}]
		A map $\varphi :(M^m,g)\rightarrow (N^{2n},J,h)$ is called
		{\em pseudo-horizontally homothetic at $x$} (shortening PHH) if is
		PHWC at a point $x\in M$  and satisfy: 
		\begin{equation}
			d\varphi _x\left( (\nabla ^M_vd\varphi ^*(JY))_x\right) =
			J_{\varphi (x)}d\varphi _x\left( (\nabla ^M_vd\varphi ^*(Y))_x\right) ,
		\end{equation}
		for any horizontal tangent vector $v\in T_xM$ and
		any vector field $Y$ locally defined on a neighbourhood of $\varphi
		(x)$.
		
		A PHWC map
		$\varphi $ is called
		{\em pseudo-horizontally homothetic},  if and
		only if 
		\begin{equation}\label{phh}
			d\varphi (\nabla ^M_Xd\varphi ^*(JY))=
			Jd\varphi (\nabla ^M_Xd\varphi ^*(Y)),
		\end{equation}
		for any horizontal vector field $X$ on $M$ and
		any vector field $Y$ on $N$.
	\end{definition}
	
	The condition (\ref{phh}) is true for every horizontal
	vector field $X$ on $M$ and any section $Y$ of
	$\varphi ^{-1}TN$. So, instead of working with vector fields on $N,$ we work with the larger
	space of sections in the pull back bundle 
	$\varphi ^{-1}TN$.
	
	\medskip
	
	One of the basic properties of pseudo-horizontally homothetic maps (Proposition 3.3, \cite{AAB}) shows that a PHH submersion is a harmonic map if and only if it has minimal fibres. Also,
	pseudo-horizontally homothetic maps are good tools to construct minimal submanifolds (Theorem 4.1, \cite{AAB}).

	\subsection{$V$-Pseudo Harmonic Morphisms}
%	{	\color{red}
%		As it is mentioned in \cite{Zhao}, a relation between the notions of $V$-harmonic morphism and locally conformal K\"ahler (shortening l.c.K) manifold is given in (Example 2.9), more precisely, any holomorphic map from an l.c.K. manifold to a Riemann surface is a $V$-harmonic morphism for $V=(1-m)B,$ where $m$ is the complex dimension of the l.c.K. manifold an $B$ is its Lee vector field.}
%	
%	\medskip
	
	Generalizing the class of harmonic maps and morphisms, respectively to $V$-harmonic maps and pseudo harmonic morphisms we obtain $V$-pseudo harmonic morphisms with a description similar to pseudo harmonic morphisms. 
	
	\begin{definition}\label{def:PVHM}
		Let $(M^m,g)$ be a Riemannian manifold of real dimension $m$, $(N^{2n},J,h)$ a Hermitian manifold of complex dimension $n$, $\varphi: M\rightarrow N$ a smooth map and $V$ a smooth vector field on $M$. The map $\varphi$ is called {\em $V$-pseudo harmonic morphism} (shortening $V$-PHM) if $\varphi$ is {\em $V$-harmonic} and {\it pseudo  horizontally weakly conformal.} 
	\end{definition}
	
	\medskip
	
	The characterization of {\it pseudo harmonic morphism} given in \cite{Loubeau} remain true in the general case of {\it $V$-harmonic} maps.
	
	\begin{theorem} \label{tpvhm}
		Let $\varphi: M\rightarrow N$ be a smooth map from a Riemannian manifold $(M^m,g)$ to a K\"ahler one $(N^{2n},J,h)$ and $V$ a smooth vector field on $M$. Then $\varphi$ is $V$-pseudo harmonic morphism ($V$-PHM) if and only if it pulls back local complex valued holomorphic functions on $N$ to local $V$-harmonic functions on $M$.
	\end{theorem}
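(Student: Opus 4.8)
The plan is to reduce everything to the composition formula of Lemma~\ref{comp} applied to $\psi=f$, a local $\mathbf{C}$-valued holomorphic function on $N$. Since the target $\mathbf{C}$ is flat, the $V$-tension field of $f\circ\varphi$ coincides with $\Delta_V(f\circ\varphi)$ (with $\Delta_V$ extended $\mathbf{C}$-linearly to complex-valued functions), so Lemma~\ref{comp} gives
\begin{equation*}
\Delta_V(f\circ\varphi)=df(\tau_V(\varphi))+\mathrm{trace}\,\nabla df(d\varphi,d\varphi).
\end{equation*}
Thus $f\circ\varphi$ is $V$-harmonic exactly when the right-hand side vanishes, and the whole statement follows once the two summands are understood separately: the first measures $V$-harmonicity of $\varphi$, while the second will be shown to encode the PHWC condition (\ref{phwc}).

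The key computation is the second fundamental form $\nabla df$ of a holomorphic function on the K\"ahler manifold $N$. Working in local complex coordinates $(z_\alpha)$, the K\"ahler condition forces the mixed Christoffel symbols to vanish, so for holomorphic $f$ the only surviving component of $\nabla df$ is the holomorphic Hessian $H_{\alpha\beta}=\partial_\alpha\partial_\beta f-\sum_\gamma {}^{N}\Gamma_{\alpha\beta}^{\gamma}\,\partial_\gamma f$, which is symmetric in $\alpha,\beta$; the mixed and anti-holomorphic components vanish because $\bar\partial f=0$. Tracing against $d\varphi\otimes d\varphi$ then yields
\begin{equation*}
\mathrm{trace}\,\nabla df(d\varphi,d\varphi)=\sum_{\alpha,\beta}(H_{\alpha\beta}\circ\varphi)\,Q^{\alpha\beta},\qquad Q^{\alpha\beta}:=\sum_{i,j}g^{ij}\frac{\partial\varphi^\alpha}{\partial x_i}\frac{\partial\varphi^\beta}{\partial x_j},
\end{equation*}
where $Q^{\alpha\beta}$ is precisely the expression of the PHWC condition (\ref{phwc}).

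One direction is then immediate. If $\varphi$ is a $V$-PHM, then $\tau_V(\varphi)=0$ kills the first summand and $Q^{\alpha\beta}=0$ (PHWC) kills the second, so $\Delta_V(f\circ\varphi)=0$ for every local holomorphic $f$; that is, $\varphi$ pulls back holomorphic functions to $V$-harmonic functions.

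For the converse — which I expect to be the main obstacle — I would argue pointwise, exploiting the freedom to prescribe the $2$-jet of a holomorphic test function. Fix $x_0\in M$ and choose complex normal coordinates centred at $\varphi(x_0)$, so that all Christoffel symbols vanish at $\varphi(x_0)$. Taking $f=z_\gamma$ the coordinate functions makes $H_{\alpha\beta}$ vanish at $\varphi(x_0)$, so the hypothesis gives $0=\Delta_V(z_\gamma\circ\varphi)(x_0)=\tau_V(\varphi)^\gamma(x_0)$ for all $\gamma$, whence $\tau_V(\varphi)=0$ and $\varphi$ is $V$-harmonic. With the first summand now vanishing identically, the hypothesis reduces to $\sum_{\alpha,\beta}(H_{\alpha\beta}\circ\varphi)\,Q^{\alpha\beta}=0$ for every holomorphic $f$. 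Choosing the quadratic functions $f=\tfrac12 z_\gamma z_\delta$, whose holomorphic Hessian at $\varphi(x_0)$ is the symmetric matrix $\tfrac12(\delta_{\alpha\gamma}\delta_{\beta\delta}+\delta_{\alpha\delta}\delta_{\beta\gamma})$, lets me isolate each (symmetric) $Q^{\gamma\delta}(x_0)$ and conclude $Q^{\alpha\beta}\equiv0$, which is exactly PHWC. The delicate points to handle carefully are the complex/real bookkeeping in $\Delta_V$ and checking that the chosen holomorphic functions genuinely realise the prescribed holomorphic Hessians at the base point after the normal-coordinate reduction.
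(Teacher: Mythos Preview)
Your proposal is correct and follows essentially the same route as the paper's proof: both apply the composition formula of Lemma~\ref{comp} with $\psi=f$, identify $\mathrm{trace}\,\nabla df(d\varphi,d\varphi)$ with a contraction of the holomorphic Hessian of $f$ against the PHWC tensor $Q^{\alpha\beta}$, and then, for the converse, use the test functions $f=z_\gamma$ and $f=z_\gamma z_\delta$ in normal coordinates at $\varphi(x_0)$ to isolate first $\tau_V(\varphi)$ and then $Q^{\alpha\beta}$. Your write-up is slightly more explicit than the paper about why the mixed and anti-holomorphic components of $\nabla df$ vanish (invoking the K\"ahler condition and $\bar\partial f=0$), which is a useful clarification, but the argument is otherwise the same.
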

	
	\begin{proof}
		We adapt the proof of Proposition 2 of \cite{Loubeau} to our context.
		Let us consider $p\in M$ a given point, $\{x_i\}_{i=\overline{1,m}}$ local coordinates at $p$, $\{z_\alpha\}_{\alpha=\overline{1,n}}$ local complex coordinates at $\varphi(p)\in N,$ and $\varphi^\alpha=z_\alpha\circ \varphi.$
		By ${^{M}\Gamma}_{ij}^k$ and ${^{N}L}_{\beta\gamma}^\alpha$ are denoted the Christoffel symbols on $M$ and $N$, respectively. The vector field $V$, in local coordinates in $M,$ reads: $V=\sum\limits_{i=1}^mV_i\frac{\partial}{\partial x_i}.$
		
		Suppose that $\varphi$ is {\it $V$-PHM} and $f: N \rightarrow {\bf C}$ is a local holomorphic function on $N$.
		
		Using the $V$-tension field of composition of two maps (Lemma \ref{comp}) and the $V$-harmonicity of the map $\varphi$ (Definition \ref{vharm}):
		$
		\tau_V(f\circ \varphi)=\mathrm{trace}\nabla df(d\varphi, d\varphi).
		$
		
		In order to prove that $\tau_V(f\circ \varphi)=0$, at point $p$, since $d\varphi(\frac{\partial}{\partial x_i})=\sum\limits_{\alpha=1}^n \frac{\partial \varphi^\alpha}{\partial x_i}\frac{\partial}{\partial z_\alpha},$ we have:
		
		\begin{equation}\label{eq1}
			\begin{array}{l}
				\nabla df\left(d\varphi\left(\frac{\partial}{\partial x_i}\right),d\varphi\left(\frac{\partial}{\partial x_j}\right)\right)=
		\sum\limits_{\alpha,\beta=1}^n\frac{\partial\varphi^\alpha}{\partial x_i}\frac{\partial \varphi^\beta}{\partial x_j}\nabla df\left(\frac{\partial}{\partial z_\alpha},\frac{\partial}{\partial z_\beta}\right).\\
			\end{array}
		\end{equation}
		
		To compute $\nabla df\left(\frac{\partial}{\partial z_\alpha},\frac{\partial}{\partial z_\beta}\right)$, first let us remark that
		$ f^{-1}T{\bf{C}}=N\times \bf{C}$ is the trivial vector bundle 
		and the fibre $(f^{-1}T{\bf{C}})_{\varphi(p)}=\bf{C}.$
		The induced connection on the pull-back bundle $f^{-1}T{\bf{C}}$ is defined by:
		$$
		\nabla^f_X\sigma:=X(\sigma), \forall \sigma\in \Gamma(f^{-1}T{\bf{C}}), (\sigma:N\rightarrow \bf{C} \mbox{ is a map} )
		$$
		So,
		\begin{equation}\label{eq2}
			\begin{array}{l}
				\nabla df\left(\frac{\partial}{\partial z_\alpha},\frac{\partial}{\partial z_\beta}\right)=
				\nabla^f_{\frac{\partial}{\partial z_\alpha}}df(\frac{\partial}{\partial z_\beta})-df(\nabla^N_{\frac{\partial}{\partial z_\alpha}}\frac{\partial}{\partial z_\beta})
				=\frac{\partial^2 f}{\partial z_\alpha \partial z_\beta}-\sum\limits_{\gamma=1}^n {^{N}L}_{\alpha\beta}^\gamma \frac{\partial f}{\partial z_\gamma}\\
			\end{array}
		\end{equation}
		
		From the equations (\ref{eq1}) and (\ref{eq2}) we obtain:
		$$
		\nabla df\left(d\varphi\left(\frac{\partial}{\partial x_i}\right),d\varphi\left(\frac{\partial}{\partial x_j}\right)\right)=
		\sum\limits_{\alpha,\beta=1}^n \frac{\partial\varphi^\alpha}{\partial x_i}\frac{\partial \varphi^\beta}{\partial x_j}\frac{\partial^2 f}{\partial z_\alpha \partial z_\beta}-\sum\limits_{\alpha,\beta,\gamma=1}^n \frac{\partial\varphi^\alpha}{\partial x_i}\frac{\partial \varphi^\beta}{\partial x_j}{^{N}L}_{\alpha\beta}^\gamma \frac{\partial f}{\partial z_\gamma}
		$$
		and
		\begin{equation}\label{etr}
			\begin{array}{c}
				\mathrm{trace}\nabla df\left(d\varphi\left(\frac{\partial}{\partial x_i}\right),d\varphi\left(\frac{\partial}{\partial x_j}\right)\right)=\\
				\\
				=\sum\limits_{\alpha,\beta=1}^n\left(\sum\limits_{i,j=1}^m g^{ij}\frac{\partial\varphi^\alpha}{\partial x_i}\frac{\partial \varphi^\beta}{\partial x_j}\right)\frac{\partial^2f}{\partial z_\alpha \partial z_\beta}-
				\sum\limits_{\alpha,\beta,\gamma=1}^n \left(\sum\limits_{i,j=1}^m g^{ij}\frac{\partial\varphi^\alpha}{\partial x_i}\frac{\partial \varphi^\beta}{\partial x_j}\right){^{N}L}_{\alpha\beta}^\gamma \frac{\partial f}{\partial z_\gamma}\\
			\end{array}
		\end{equation}
		As $\varphi$ is PHWC (see (\ref{phwc})), the term $\sum\limits_{i,j=1}^m g^{ij}\frac{\partial\varphi^s}{\partial x_i}\frac{\partial \varphi^k}{\partial x_j},$ in relation (\ref{etr}), vanishes, and hence
		$$\mathrm{trace}\nabla df\left(d\varphi\left(\frac{\partial}{\partial x_i}\right),d\varphi\left(\frac{\partial}{\partial x_j}\right)\right)=0.$$
		
		Conversely, consider $f:N\rightarrow \bf{C}$ a local complex holomorphic function, $V$ a smooth vector field on $M$, and $\varphi:M\rightarrow N$ a smooth map, such that $\tau_V(f\circ \varphi)=0$. 
		
		Applying the chain rule for $V$-harmonic maps (Lemma \ref{comp}), we get:
		\begin{equation}\label{eq5}
			0=df(\tau_V(\varphi))+\mathrm{trace} \nabla df(d\varphi,d\varphi)
		\end{equation}
		We compute the above equality in local coordinates, using the local description of the $V$-tension field (\ref{tauv}) and the equation (\ref{etr}): 
		\begin{equation}\label{eq3}
			\begin{array}{c}
				0=\sum\limits_{\alpha=1}^n\tau(\varphi)^\alpha\frac{\partial f}{\partial z_\alpha}+
				\sum\limits_{\alpha=1}^n\sum\limits_{i=1}^mV_i\frac{\partial\varphi^\alpha}{\partial x_i}\frac{\partial f}{\partial z_\alpha}\\
				\\
				+\sum\limits_{\alpha,\beta=1}^n\left(\sum\limits_{i,j=1}^m g^{ij}\frac{\partial\varphi^\alpha}{\partial x_i}\frac{\partial \varphi^\beta}{\partial x_j}\right)\frac{\partial^2f}{\partial z_\alpha \partial z_\beta}-
				\sum\limits_{\alpha,\beta,\gamma=1}^n \left(\sum\limits_{i,j=1}^m g^{ij}\frac{\partial\varphi^\alpha}{\partial x_i}\frac{\partial \varphi^\beta}{\partial x_j}\right){^{N}L}_{\alpha\beta}^\gamma \frac{\partial f}{\partial z_\gamma}\\
			\end{array}
		\end{equation}
		
		Choosing in (\ref{eq3}) particular holomorphic functions $f$, for example $f(z)=z_k, \forall k=\overline{1,n},$ and normal coordinates centred at $p$ (respectively, at $\varphi(p)$), then $\frac{\partial^2f}{\partial z_\alpha \partial z_\beta}=0,$ $\frac{\partial f}{\partial z_\gamma}=\delta_{k\gamma},$ and the Christoffel symbols of $N$ vanish. Then, 
		
		$\tau_V(f\circ\varphi)=0$ is equivalent to:
		$$\tau(\varphi)^k+\sum\limits_{i=1}^mV_i\frac{\partial \varphi^k}{\partial x_i}=0, \forall k=\overline{1,n},
		$$
		where the term $\tau(\varphi)^k+\sum\limits_{i=1}^mV_i\frac{\partial \varphi^k}{\partial x_i}=0 $ is the $k$ component of the $\tau_V(\varphi).$
		
		This proves that $\varphi$ is a $V$-harmonic map. It follows that the equation (\ref{eq5}) reduces to:
		$\mathrm{trace}\nabla df(d\varphi,d\varphi)=0.$
		
		In this last equality, choosing $f(z)=z_\alpha z_\beta, \forall \alpha, \beta=\overline{1,n}$ and normal coordinates in $M$ and $N$ respectively, $\frac{\partial^2 (z_\alpha z_\beta)}{\partial z_\alpha \partial z_\beta}=1,$ and ${^{N}L}_{\alpha\beta}^\gamma=0,$ implies 
		$$\sum\limits_{i,j=1}^m g^{ij}\frac{\partial\varphi^\alpha}{\partial x_i}\frac{\partial \varphi^\beta}{\partial x_j}=0
		$$ which means that $\varphi$ is PHWC.
	\end{proof}
	
	The following result gives a relation between $V$-harmonic morphisms, pseudo harmonic morphisms and $V$-pseudo harmonic morphisms, compare to Proposition 3 of \cite{Loubeau}.
	\begin{proposition}
		Let $(M,g)$ and $(N,h)$ be two Riemannian manifolds, $V$ a smooth vector field on $M$, $(P,J,p)$ a K\" ahler manifold and $\psi:M\rightarrow N$ and $\varphi:N\rightarrow P$ two smooth maps. If $\psi$ is $V$-harmonic morphism and $\varphi$ is pseudo harmonic morphism (PHM), then $\varphi \circ \psi$ is $V$-pseudo harmonic morphism ($V$-PHM).
	\end{proposition}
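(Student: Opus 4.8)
The plan is to avoid local coordinate computations entirely and to argue through the functional characterizations of the three classes of maps involved. By Theorem~\ref{tpvhm}, the composition $\varphi\circ\psi:M\to P$ is a $V$-pseudo harmonic morphism if and only if it pulls back local complex-valued holomorphic functions on $P$ to local $V$-harmonic functions on $M$. So I would fix an arbitrary local holomorphic function $f:W\to\mathbf{C}$ defined on an open set $W\subset P$ with $(\varphi\circ\psi)^{-1}(W)\neq\emptyset$, and show that $f\circ(\varphi\circ\psi)=(f\circ\varphi)\circ\psi$ is $V$-harmonic on the corresponding open subset of $M$; once this is established for every such $f$, the conclusion is immediate from Theorem~\ref{tpvhm}.

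The verification then splits naturally along the two composed maps. First, since $\varphi:N\to P$ is a pseudo harmonic morphism, by definition it pulls back the holomorphic function $f$ to a local harmonic map $f\circ\varphi:\varphi^{-1}(W)\to\mathbf{C}$; writing $f\circ\varphi=u+\sqrt{-1}\,v$, both real components $u$ and $v$ are harmonic functions on $N$. Next, since $\psi:M\to N$ is a $V$-harmonic morphism, it pulls back every local harmonic function on $N$ to a local $V$-harmonic function on $M$; applied to $u$ and $v$ separately, this gives that $u\circ\psi$ and $v\circ\psi$ are $V$-harmonic. Because $\Delta_V=\Delta+\langle V,\nabla\,\cdot\,\rangle$ is an $\mathbf{R}$-linear operator, the complex combination $(f\circ\varphi)\circ\psi=(u\circ\psi)+\sqrt{-1}\,(v\circ\psi)$ is $V$-harmonic as well. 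Since $f\circ(\varphi\circ\psi)$ coincides with this pullback and $f$ was arbitrary, $\varphi\circ\psi$ pulls back local holomorphic functions to local $V$-harmonic functions, and Theorem~\ref{tpvhm} finishes the argument. This is the exact transitivity analogue of Proposition~3 of \cite{Loubeau} and of Corollary~\ref{lift}.

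I expect no serious computational obstacle here; the content is precisely the compatibility of the two pull-back properties under composition. The only points that require genuine care are bookkeeping ones: ensuring that the open sets and their nonempty preimages nest correctly under composition, and the passage from real-valued to complex-valued functions, which is legitimate exactly because $V$-harmonicity is governed by the linear operator $\Delta_V$. Should one prefer an intrinsic proof, an alternative route is to compute $\tau_V(\varphi\circ\psi)$ directly from the composition formula of Lemma~\ref{comp} and to deduce the PHWC identity \eqref{phwc} for $\varphi\circ\psi$ from that of $\varphi$ together with the horizontal weak conformality of $\psi$; this is considerably more laborious, and I would retain it only as a fallback.
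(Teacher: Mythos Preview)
Your proposal is correct and follows essentially the same route as the paper: both argue via Theorem~\ref{tpvhm} by showing that a local holomorphic $f$ on $P$ is first pulled back by the PHM $\varphi$ to a harmonic map into $\mathbf{C}$, and then by the $V$-harmonic morphism $\psi$ to a $V$-harmonic function on $M$. Your explicit splitting $f\circ\varphi=u+\sqrt{-1}\,v$ and appeal to the $\mathbf{R}$-linearity of $\Delta_V$ is in fact slightly more careful than the paper, which handles this passage by a direct reference to Corollary~\ref{lift}.
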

	
	\begin{proof}
		As $\varphi$ is a pseudo harmonic morphism (see Section 1.2), for any $f:W\rightarrow \mathbf{C}$ a local complex valued holomorphic function defined in an open subset $W\subset P,$ with $\varphi^{-1}(W)\subset N$ non-empty, $f\circ \varphi:\varphi^{-1}(W)\rightarrow \mathbf{C}$ is a local harmonic function on $N$.
		
		Using Corollary \ref{lift} for the $V$-harmonic morphism $\psi$ and the local harmonic function  $f\circ \varphi:\varphi^{-1}(W)\rightarrow \mathbf{C}$, on $N$, we get $f\circ\varphi\circ\psi:\psi^{-1}(\varphi^{-1}(W))\rightarrow \mathbf {C},$ a local $V$-harmonic function on $M$.
		
		So, $\varphi\circ \psi:M\rightarrow P$ pulls-back local holomorphic functions on $P$ to local $V$-harmonic functions on $M$. By Theorem \ref{tpvhm}, $\varphi\circ \psi$ is $V$-PHM. 
	\end{proof}

	\begin{theorem}\label{p13}
		Let $\varphi: (M^m,g)\rightarrow (N^{2n},J,h), n\geq 2,$ be a pseudo horizontally homothetic submersion (PHH). Then $\varphi$ is $V$-harmonic if and only if $\varphi$ has $V$-minimal fibres.
	\end{theorem}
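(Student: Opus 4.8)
The plan is to compute the tension field $\tau(\varphi)$ in a frame adapted to the splitting $TM=\mathcal{H}\oplus\mathcal{V}$ into the horizontal distribution $\mathcal{H}=(\ker d\varphi)^{\perp}$ and the vertical distribution $\mathcal{V}=\ker d\varphi$, the latter being tangent to the fibres. Fixing a regular point and a local orthonormal frame $\{X_a\}\cup\{U_\alpha\}$ adapted to this splitting, I would write
$$\tau(\varphi)=\sum_a\nabla d\varphi(X_a,X_a)+\sum_\alpha\nabla d\varphi(U_\alpha,U_\alpha).$$
First I would handle the vertical sum. Since $d\varphi$ annihilates vertical vectors, $\nabla d\varphi(U_\alpha,U_\alpha)=-d\varphi(\nabla^M_{U_\alpha}U_\alpha)$; decomposing $\nabla^M_{U_\alpha}U_\alpha$ into its fibre-tangent part and its normal part $A(U_\alpha,U_\alpha)$, only the latter survives under $d\varphi$, so that
$$\sum_\alpha\nabla d\varphi(U_\alpha,U_\alpha)=-d\varphi(\mathrm{trace}(A)),$$
where $\mathrm{trace}(A)\in\Gamma(TK^{\perp})=\Gamma(\mathcal{H})$ is the mean curvature vector of the fibre $K$.

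The decisive step is the horizontal sum. Here the pseudo-horizontal homothety enters: precisely as in the proof of Proposition 3.3 of \cite{AAB}, the PHH identity (\ref{phh}) together with the PHWC relation (\ref{phwc}) and the K\"ahler condition on $N$ forces
$$\sum_a\nabla d\varphi(X_a,X_a)=0.$$
I would reproduce that computation, pairing the restriction of $\nabla d\varphi$ to $\mathcal{H}$ against $J$ over a $J$-adapted horizontal frame and using the vanishing of the symmetric expression in (\ref{phwc}); this is the heart of the matter and the only place where the full PHH hypothesis (as opposed to mere horizontal weak conformality, cf. Theorem \ref{thm:V-ha-minimal}) is genuinely used. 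Granting it, I obtain the clean formula $\tau(\varphi)=-d\varphi(\mathrm{trace}(A))$, which for $V=0$ is exactly the harmonic-versus-minimal equivalence of \cite{AAB}.

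Finally I would assemble the $V$-tension field. Writing $V=V^{\top}+V^{\perp}$ for the decomposition into fibre-tangent and normal (horizontal) parts, and using $d\varphi(V)=d\varphi(V^{\perp})$ since $d\varphi$ kills $V^{\top}$, the definition (\ref{v}) gives
$$\tau_V(\varphi)=\tau(\varphi)+d\varphi(V)=-d\varphi(\mathrm{trace}(A)-V^{\perp}).$$
Both $\mathrm{trace}(A)$ and $V^{\perp}$ are horizontal, and since $\varphi$ is a submersion, $d\varphi$ restricts at each regular point to a linear isomorphism $\mathcal{H}_x\to T_{\varphi(x)}N$; hence $\tau_V(\varphi)=0$ if and only if $\mathrm{trace}(A)-V^{\perp}=0$. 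By the reformulation recorded just after Definition \ref{def:V-minimal}, the latter is exactly the condition that the fibres be $V$-minimal, which proves the equivalence. The main obstacle is the vanishing of the horizontal trace in the second step; once that is secured, the remainder is the linear bookkeeping carried out above.
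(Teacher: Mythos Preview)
Your proposal is correct and follows essentially the same route as the paper: both arguments invoke the computation of Proposition~3.3 in \cite{AAB} to obtain $\tau(\varphi)=-d\varphi\bigl(\sum_j\nabla^M_{u_j}u_j\bigr)=-d\varphi(\mathrm{trace}(A))$, and then read off the equivalence from $\tau_V(\varphi)=-d\varphi(\mathrm{trace}(A)-V^{\perp})$. Your final step, using that $d\varphi|_{\mathcal{H}}$ is a linear isomorphism at each point to conclude in one stroke, is slightly crisper than the paper's separate treatment of the two implications, but the substance is identical.
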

	
	\begin{proof}
		
		Similar to Proposition 3.3, \cite{AAB}, we can choose in the pull-back bundle $\varphi^{-1}TN$ a local frame $\{e_1, e_2,\dots,e_n,Je_1,Je_2,\dots,Je_n\}$ such that 
		$$\{d\varphi^*(e_1),\dots,d\varphi^*(e_n),d\varphi^*(Je_1),\dots,d\varphi^*(Je_n)\}$$ is an orthogonal frame in the horizontal distribution: choose $e_1\in \Gamma(\varphi^{-1}TN)$ a non-vanishing section and, using the PHWC property of $\varphi$ and the fact that $d\varphi^*(e_1)$ is an horizontal vector, we get the orthogonality  of $d\varphi^*(e_1)$ and $d\varphi^*(Je_1)$. At step $k$, take $e_k$ orthogonal on both vectors $d\varphi\circ d\varphi^*(e_i)$ and $d\varphi\circ d\varphi^*(Je_i)$, $\forall i\leq k-1.$

		If we denote by $E_k=d\varphi^*(e_k), E_k^\prime=d\varphi^*(Je_k),$ then we have an orthogonal frame in the horizontal distribution $\mathcal{H}(TM),$ $\{E_1,E_2,\dots,E_n,E_1^\prime,\dots,E_n^\prime\}.$ Note that, by the PHWC condition of $\varphi$, 
		\begin{equation}\label{7a}
			g(E_i,E_i)=g(E_i^\prime,E_i^\prime).
		\end{equation}
		
		It was proved in \cite{AAB} that, the property of the induced connection $\nabla^\varphi$ on $\varphi^{-1}TN$ (Lemma 1.16, \cite{Urakawa}) and the PHH condition, imply:
		
		\begin{equation}\label{fi}
			\nabla_{E_i^\prime}^\varphi d\varphi(E_i^\prime)=-\nabla_{E_i}^\varphi d\varphi(E_i)+J d\varphi([E_i^\prime,E_i])
		\end{equation}
		and 
		\begin{equation}\label{fia}
			\frac{1}{g(E_i,E_i)}Jd\varphi([E_i^\prime,E_i])=\frac{1}{g(E_i^\prime,E_i^\prime)}\nabla_{E_i^\prime}^\varphi d\varphi(E_i^\prime)+\frac{1}{g(E_i,E_i)}\nabla_{E_i}^\varphi d\varphi(E_i)\\
		\end{equation}
		
		\begin{equation}
			\label{9a}
			d\varphi(\nabla^M_{E_i^\prime}E_i^\prime)= -d\varphi(\nabla^M_{E_i}E_i)+Jd\varphi([E_i^\prime,E_i]).
		\end{equation}
		
		and
		
		\begin{equation}\label{9}
			\frac{1}{g_M(E_i,E_i)}Jd\varphi([E_i^\prime,E_i])=
			\frac{1}{g_M(E_i,E_i)}d\varphi(\nabla^M_{E_i}E_i)+\frac{1}{g_M(E_i^\prime,E_i^\prime)}d\varphi(\nabla^M_{E_i^\prime}E_i^\prime).
		\end{equation}
		
		We choose $\{u_1,u_2,\dots,u_s\}$ an orthonormal basis for the vertical distribution $\mathcal{V}(TM).$ With this notation and using (\ref{fi}), (\ref{fia}), (\ref{9a}), (\ref{9}), it was shown in Proposition 3.3. in \cite{AAB}, that 
		\begin{equation}
			\label{eq:tau-PHH}
			\tau(\varphi)=-d\varphi(\sum\limits_{j=1}^s\nabla^M_{u_j}u_j).
		\end{equation}

			If $\varphi$ is a $V$-harmonic map, then $\tau_V(\varphi)=0$ and, from (\ref{eq:tau-PHH}) we obtain:
			$$0=-d\varphi(\sum\limits_{j=1}^s\nabla^M_{u_j}u_j)+d\varphi(V),
			$$
			which is equivalent with the $V$-minimality of the fibres.
			
			Conversely, suppose $\varphi$ has $V$-minimal fibres.
			As $\varphi$ is a PHH submersions, in the above constructed frame $\{u_1,\dots,u_s,E_1,\dots,E_n,E^\prime_1,\dots,E^\prime_n\},$ the $V$-tension field of $\varphi$ reads:
			$$
			\tau_V(\varphi)=-d\varphi(\sum\limits_{j=1}^s\nabla^M_{u_j}u_j)+d\varphi(V)
			$$
			The $V$-minimality of the fibres imply $d\varphi(\mathrm{trace}(A)-V)=0$.
			
			Let us choose $y\in N,$ and denote the fibre by $K=\varphi^{-1}(y).$
			
			Using (\ref{7a}), we have
			$$
			\begin{array}{c}
				\mathrm{trace} (A)-V=\sum\limits_{i=1}^sA(u_i,u_i)-V
				=\frac{1}{g_M(u_i,u_i)}\sum\limits_{i=1}^s(\nabla^M_{u_i}u_i-\nabla^K_{u_i}u_i)-V\\
			\end{array}
			$$
			Since $\nabla^K_{u_i}u_i$ is vertical, $d\varphi(\nabla^K_{u_i}u_i)=0,$ and
			$$d\varphi(\mathrm{trace}(A)-V)=\sum\limits_{i=1}^s d\varphi(\nabla^M_{u_i}u_i)-d\varphi(V),
			$$
			and hence $\varphi$ is $V$-harmonic.
		\end{proof}
		
		%%%%%%%%%%%%%%%%%%%%%%%%%%%%%%%%%%%%%%
		%%%%%%%%%%%%%%%%%%%%%%%%%%%%%%%%%%%%%%

		The construction of minimal submanifolds was done for horizontally homothetic harmonic morphisms (see \cite{BG}) and generalised for the case of pseudo-horizontally homothetic harmonic submersions (see \cite{AAB}). Replacing harmonicity by $V$-harmonicity, a similar result can be proved.  
		
		\begin{theorem}\label{thm:pullback}
			Let $(M^m,g)$ be a Riemannian manifold, $(N^{2n},J,h)$ be a K\"ahler manifold, $V$ a smooth vector field on $M$ and $\varphi: M\rightarrow N$ be a pseudo-horizontally homothetic (PHH), $V$-harmonic submersion.
			
			If $P^{2p}\subset N^{2n}$ is a complex submanifold of $N$, then $\varphi^{-1}(P)\subset M$ is a $V$-minimal submanifold of $M$.
		\end{theorem}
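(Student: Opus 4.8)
The plan is to follow the strategy of Theorem~\ref{p13} and of Theorem 4.1 of \cite{AAB}, splitting the mean curvature of $K := \varphi^{-1}(P)$ into a vertical and a horizontal part and disposing of them by the $V$-minimality of the fibres and by the complex (hence minimal) nature of $P$, respectively. First I set up the geometry. Since $\varphi$ is a submersion and $P$ a submanifold, $K$ is a submanifold of $M$ with $T_xK = \mathcal V_x \oplus \mathcal H^P_x$, where $\mathcal V = \ker d\varphi$ and $\mathcal H^P_x := (d\varphi_x|_{\mathcal H_x})^{-1}(T_{\varphi(x)}P)$ is the horizontal lift of $T_{\varphi(x)}P$; consequently the normal bundle $TK^\perp$ is the orthogonal complement $\mathcal H^{P\perp}$ of $\mathcal H^P$ inside $\mathcal H$, and in particular it is horizontal. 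Because $P$ is a complex submanifold, both $T_{\varphi(x)}P$ and $T_{\varphi(x)}P^\perp$ are $J$-invariant; together with the PHWC compatibility of the induced almost complex structure $J^{\mathcal H}$ on $\mathcal H$ (which is exactly the content of the orthogonal frame construction and of (\ref{7a})), this lets me refine the frame of Theorem~\ref{p13}: I take an orthogonal horizontal frame $\{E_1,\dots,E_n,E_1',\dots,E_n'\}$ with $E_i = d\varphi^*(e_i)$, $E_i' = d\varphi^*(Je_i)$, $g(E_i,E_i)=g(E_i',E_i')$, and such that $\{E_i,E_i'\}_{i\le p}$ spans $\mathcal H^P$ while $\{E_i,E_i'\}_{i>p}$ spans $\mathcal H^{P\perp}$.

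With this frame and an orthonormal vertical frame $\{u_1,\dots,u_s\}$, I expand the mean curvature $\mathrm{trace}(A)=\sum_j A(u_j,u_j)+\sum_{i\le p}\frac{1}{g(E_i,E_i)}\big(A(E_i,E_i)+A(E_i',E_i')\big)$, where $A(X,Y)=(\nabla^M_XY)^{\perp_K}$. For the vertical terms I note that, since $\nabla^K_{u_j}u_j$ is vertical and $TK^\perp\subset\mathcal H$, the quantity $A(u_j,u_j)$ is the $\mathcal H^{P\perp}$-component of the horizontal part of $\nabla^M_{u_j}u_j$, i.e. of the fibre's second fundamental form; summing and invoking Theorem~\ref{p13} (so that the fibres of $\varphi$ are $V$-minimal, which reads $\sum_j(\nabla^M_{u_j}u_j)^{\mathcal H}=V^{\mathcal H}$) gives $\sum_j A(u_j,u_j)=(V^{\mathcal H})^{\mathcal H^{P\perp}}=V^{\perp_K}$. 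For the horizontal terms I apply the PHH identity (\ref{9a}), which yields $d\varphi(\nabla^M_{E_i}E_i+\nabla^M_{E_i'}E_i')=Jd\varphi([E_i',E_i])$ for each $i\le p$.

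The key point is then that this image lies in $TP$: the frame having been chosen tangent to $K$, the bracket $[E_i',E_i]$ is a section of the integrable distribution $TK$, whence $d\varphi([E_i',E_i])\in d\varphi(TK)=TP$ and, $P$ being complex, $Jd\varphi([E_i',E_i])\in TP$ as well. Therefore $(\nabla^M_{E_i}E_i+\nabla^M_{E_i'}E_i')^{\mathcal H}$ is the horizontal lift of a vector of $TP$, so it lies in $\mathcal H^P$ and has vanishing $\mathcal H^{P\perp}=TK^\perp$ component. Hence the entire horizontal contribution to $\mathrm{trace}(A)$ is zero, leaving $\mathrm{trace}(A)=V^{\perp_K}$, which by the remark following Definition~\ref{def:V-minimal} is precisely the $V$-minimality of $K$.

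I expect the main obstacle to be the frame adaptation, namely arranging a single horizontal frame that is simultaneously of the form $d\varphi^*(e_i)$ (so that (\ref{9a}) applies verbatim) and tangent to $K$ (so that integrability of $TK$ forces $d\varphi([E_i',E_i])\in TP$); note that $d\varphi\circ d\varphi^*$ need not preserve $T_{\varphi(x)}P$, so these two requirements are not automatically compatible. Concretely this is resolved by taking the $E_i$ to be the horizontal lifts of a $J$-adapted $g$-orthonormal frame $\{\tilde e_i\}$ of $TP$ and setting $e_i=(d\varphi\circ d\varphi^*)^{-1}(\tilde e_i)$, so that $d\varphi^*(e_i)$ is exactly the horizontal lift of $\tilde e_i$; one must then check that this choice respects both the norm relation (\ref{7a}) and the PHWC orthogonality, which follows from the fact that $J^{\mathcal H}$ is a $g$-isometry of $\mathcal H$.
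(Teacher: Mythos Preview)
Your proof is correct and follows essentially the same route as the paper: decompose $T_xK=\mathcal V_x\oplus \mathcal H^P_x$, build an adapted horizontal frame $E_i=d\varphi^*(e_i)$, $E_i'=d\varphi^*(Je_i)$ tangent to $K$ via $e_i=(d\varphi\circ d\varphi^*)^{-1}(\tilde e_i)$ with $\tilde e_i\in TP$, handle the vertical block by the $V$-minimality of the fibres (Theorem~\ref{p13}), and kill the horizontal block using the PHH identity (\ref{9a}). Your observation that $[E_i',E_i]\in\Gamma(TK)$ by integrability, hence $Jd\varphi([E_i',E_i])\in TP$ because $P$ is complex, is exactly what makes the cancellation displayed in the paper's computation (\ref{13}) legitimate: once $d\varphi(\nabla^M_{E_i}E_i+\nabla^M_{E_i'}E_i')\in TP$ one has $\nabla^M_{E_i}E_i+\nabla^M_{E_i'}E_i'\in TK$, so the $\nabla^M$ and $\nabla^K$ contributions coincide and cancel. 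In that sense your write-up makes explicit the point the paper leaves implicit, but the argument is the same.
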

		
		\begin{proof}
			We shall use an orthogonal local basis in the tangent bundle of $\varphi^{-1}(P),$ as in (\cite{AAB}). Specifically, let us consider the decomposition of $TM$ into the vertical distribution $V$ and the horizontal one $H$, $T_xM=V_x\oplus H_x,$ for any point $x\in M.$ Denote $\varphi^{-1}(P),$ by $K,$ $H_1=TK\cap H$ and, for any $x\in K,$ $H_x=H_{1_x}\oplus H_{2_x}.$ Also, $T_xK=V_x\oplus H_{1_x},$ $H_{1_x}=\{u\in H_x, d\varphi_x(u)\in T_{\varphi(x)}P\},$ $\forall x\in K.$
			
			We choose a linearly independent system of local sections 
			$\{e_1,\dots,e_p,Je_1,\dots,Je_p \}$ in $\varphi^{-1}TN,$  such that, if we denote by $E_i=d\varphi^*(e_i)$ and $E_i^\prime=d\varphi^*(Je_i),$ the restriction of the system to $K$ is a local orthonormal basis in $H_1.$
			
			The system was chosen in the following way. Since $d\varphi_{x_{|_{H_x}}}\circ d\varphi^*_x $ is a linear isomorphism, take $v_1$ a local section of $\varphi^{-1}TP,$ non-vanishing along $K$ and choose the section $e_1$ of $\varphi^{-1}TN,$ such that $(d\varphi_{|H}\circ d\varphi^*)(e_1)=v_1$. Using the PHWC condition of $\varphi$, $d\varphi(d\varphi^*(Je_1))$ is also a section of $\varphi^{-1}TP$ and $d\varphi^*(e_1)$ and $d\varphi^*(Je_1)$ are orthogonal local vector fields in $H$ and also in $H_1$. By induction, at step $k$, we take  $e_k$ perpendicular on both $(d\varphi\circ d\varphi^*)(e_i)$ and $(d\varphi\circ d\varphi^*)(Je_i), \forall 1\leq i\leq k-1,$ such that $(d\varphi\circ d\varphi^*)(e_i)$ is a section in $\varphi^{-1}TP.$
			
			In the sequel, the computations are  performed on points of $K$ only.
			
			Consider $\{u_1,u_2,\dots,u_r\}$ an orthonormal local basis in the vertical distribution $V$ such that $\{u_1,u_2,\dots,u_r,E_1,\dots,E_p,E^\prime_1,\dots,E^\prime_p\}$ is an orthogonal local basis in $TK.$ Denote by $A$ the second fundamental form of the submanifold $K$.
			
			The $V$-minimality of $K$ is equivalent to $\mathrm{trace}(A)-V\in \Gamma(TK),$ which reads:
			\smallskip
			
			$\begin{array}{c}
				\sum\limits_{i=1}^r\left[\frac{1}{g_M(u_i,u_i)}d\varphi(\nabla^M_{u_i}u_i-\nabla^K_{u_i}u_i)\right]
				+\sum\limits_{i=1}^p\left[\frac{1}{g_M(E_i,E_i)}d\varphi(\nabla^M_{E_i}E_i-\nabla^K_{E_i}E_i)\right]\\\\
				+\sum\limits_{i=1}^p\left[\frac{1}{g_M(E^\prime_i,E^\prime_i)}d\varphi(\nabla^M_{E^\prime_i}E^\prime_i-\nabla^K_{E^\prime_i}E^\prime_i)\right]-d\varphi(V) \mbox{ is a section of } \varphi^{-1}TP.
			\end{array}$
			
			\smallskip
			Using relations (\ref{7a}) and (\ref{9a}), we compute:
			\smallskip
			
				$\begin{array}{l}
					\sum\limits_{i=1}^p\left[\frac{1}{g_M(u_i,u_i)}d\varphi(\nabla^M_{u_i}u_i-\nabla^K_{u_i}u_i)\right]
					+\sum\limits_{i=1}^p\left[\frac{1}{g_M(E_i,E_i)}d\varphi(\nabla^M_{E_i}E_i)+\frac{1}{g_M(E^\prime_i,E^\prime_i)}d\varphi(\nabla^M_{E^\prime_i}E^\prime_i)\right]\\
					\\
					-\sum\limits_{i=1}^p\left[\frac{1}{g_M(E_i,E_i)}(\nabla^K_{E_i}E_i)+\frac{1}{g_M(E^\prime_i,E^\prime_i)}(\nabla^K_{E^\prime_i}E^\prime_i)\right]-d\varphi(V)\\
					\\
					=\sum\limits_{i=1}^pd\varphi(\nabla^M_{u_i}u_i)-\sum\limits_{i=1}^pd\varphi(\nabla^K_{u_i}u_i)-d\varphi(V)
				\end{array}$

			As $\varphi$ is a pseudo-horizontally homothetic $V$-harmonic submersion, from Theorem (\ref{p13}), $\varphi$ has $V$-minimal fibres, which is equivalent to: $d\varphi(\sum\limits_{i=1}^p\nabla^M_{u_i}u_i)=d\varphi(V).$
			
			Hence, $\mathrm{trace}(A)-V\in \Gamma(TK)$ is equivalent to $\sum\limits_{i=1}^pd\varphi(\nabla^K_{u_i}u_i)$ is a section of $\varphi^{-1}TP.$ 
		\end{proof}

		\noindent
		{\bf Acknowledgements.}
		This work was partially supported by Romanian Ministry of Education,
		Program PN-III, Project number PN-III-P4-ID-PCE-2020-0025, Contract 30/04.02.2021 and by "Dunarea de Jos" University of Galati Grant, Project number  RF 2488/31.05.2024.

		\vskip .2cm
		
		\small \noindent Department of Mathematics and Computer Science, Faculty of Sciences and Environmental, \\
		University "Dun\u area de Jos" of Gala\c ti,
		111 \c Stiin\c tei Str., RO-800189, Gala\c ti, Romania.\\\ 
%		and \\\\
%		Faculty of Mathematics and Computer Science, University of Bucharest, 14 Academiei Str.,
%		70109, Romania.\\\
		{\it Email address}: Monica.Aprodu@ugal.ro 
		\\\\
	\end{document}